\theoremstyle{plain}
   \newtheorem{theorem}{Theorem}
   \newtheorem{corollary}[theorem]{Corollary}
   \newtheorem{lemma}[theorem]{Lemma}
   \newtheorem{proposition}[theorem]{Proposition}
\theoremstyle{definition}
\theoremstyle{remark}
\newcommand{\diam}{\mathrm{diam}}
\def\RR{\mathbb{R}}
\def\NN{\mathbb{N}}
\numberwithin{equation}{section}
\begin{document}

\title[Powers of distances as  Muckenhoupt weights]
 {Powers of distances to lower dimensional sets as  Muckenhoupt weights}

\author[H. Aimar]{Hugo Aimar}
\address{Instituto de Matem\'atica Aplicada del Litoral
(CONICET-UNL), Departamento de Matem\'{a}tica (FIQ-UNL),  Santa Fe,
Argentina.} \email{haimar@santafe-conicet.gov.ar}

\author[M. Carena]{Marilina Carena}
\address{Instituto de Matem\'atica Aplicada del Litoral
(CONICET-UNL), Departamento de Matem\'{a}tica (FHUC-UNL),  Santa Fe,
Argentina.} \email{mcarena@santafe-conicet.gov.ar}

\author[R. Dur\'an]{Ricardo Dur\'an}
\address{Instituto de Investigaciones Matem\'aticas ``Luis A. Santal\'o''
(CONICET-UBA), Departamento de Matem\'atica (UBA), Buenos Aires,
Argentina.} \email{rduran@dm.uba.ar}

\author[M. Toschi]{Marisa Toschi}
\address{Instituto de Matem\'atica Aplicada del Litoral
(CONICET-UNL), Departamento de Matem\'atica (FIQ-UNL), Santa Fe,
Argentina.} \email{mtoschi@santafe-conicet.gov.ar}


\subjclass[2010]{Primary 28A25; Secondary 28A78}

\keywords{Ahlfors spaces, Hardy-Littlewood maximal operator, Muckenhoupt weights, Hausdorff measure}

\maketitle

\begin{abstract}
Let $(X,d,\mu)$ be an Ahlfors metric measure space. We give sufficient conditions on a closed set $F\subseteq X$ and on a real number $\beta$
in such a way that $d(x,F)^\beta$ becomes a Muckenhoupt weight.
 We give also some illustrations to regularity of solutions of partial differential equations and regarding some classical fractals.
\end{abstract}

\section{Introduction}

Under different conditions for a domain $\Omega$, weighted norm estimates for solutions of linear and nonlinear equations have
 been studied by several authors (see
for example \cite{D-S}, \cite{DuranSanmartinoToschi}, \cite{DuranSanmartinoToschi2}, \cite{PS}), where the weight of interest is a power of the
 distance to the boundary $\partial\Omega$.

The class of Muckenhoupt weights   $A_p(\RR^n)$ is a fundamental tool in real and harmonic analysis. The first non-trivial examples of weights in $A_p(\RR^n)$ are the weights
$|x|^\beta$ for $-n<\beta< n(p-1)$.
The results in \cite{DuranSanmartinoToschi} show that in a domain
whose boundary has dimension $n-1$, if the domain is smooth enough
then $d^\beta(x,\partial\Omega)\in A_p(\RR^n)$ for $-1<\beta< (p-1)$.
In \cite{DuranLopez}  this result has been generalized to some $s$-dimensional compact sets  $F$ in $\RR^n$ with $0\le s<n$. They proved
that $d^\beta(x,F)\in A_p(\RR^n)$ for $-(n-s)<\beta< (n-s)(p-1)$.

Therefore thinking the domains on $\RR^n$ as complements of closed sets, it seems natural to try to consider the somehow heterogeneous situation in which the boundary shows different dimensions at different points. With that generality the problem looks hard. But with some extra hypotheses on the structure of $\partial\Omega$ some extensions are possible.
The techniques used here extend naturally to general metric measure space $(X,d,\mu)$ satisfying the so called Ahlfors condition, which is a particular case of space of homogeneous type.

The paper is organized as follows.
In Section~\ref{section: general} we give some definitions and notation on metric spaces and the Hardy-Littlewood maximal function of measures obtaining the
finiteness of such function for certain measures.
The main results of this note are contained in Section~\ref{section: main results}. There we prove that powers of the Hardy-Littlewood maximal function of measures belong to the
Muckenhoupt class and in some cases we can describe the behavior of such function giving a family of weights on metric measure spaces.
In Section~\ref{section: applications} we give some applications. First we obtain weighted bounded estimates for gradients of solutions of polyharmonic equations and then we produce families of weights on some classical fractals.

\section{Hardy-Littlewood maximal functions of measures}\label{section: general}
 Let $X$ be  a  set.  A \emph{\textbf{quasi-distance}} on $X$ is  a non-negative
symmetric function  $d$ defined on $X\times X$ such that
$d(x,y)=0$ if and only if $x=y$, and there exists a constant
$K\geq 1$ such that  the inequality \[d(x,y)\leq K(d(x,z)+d(z,y))\] holds for every $x,y,z\in X$.
We will refer to $K$ as the triangle constant for $d$.
A quasi-distance $d$ on $X$ induces a topology  through the
neighborhood system given by the family of all subsets of $X$
containing a $d$-ball $B(x,r)=\{y\in X:d(x,y)<r\}$,
$r>0$ (see \cite{CoifWeiss}). In a quasi-metric space $(X,d)$ the \emph{\textbf{diameter}} of
a subset $E$ is defined as
\[\diam(E)=\sup\{d(x,y):x,y\in E\},\]
and the distance between a point $x\in X$ and a set $E$ is defined by $d(x,E)=\inf\{d(x,y): y\in E\}$.

Throughout this paper $(X,d)$ shall be a  quasi-metric
space such that the $d$-balls are open sets.

 We shall say that $(X,d,\mu)$ is a \emph{\textbf{space of homogeneous type}} if  $\mu$ is a non-negative  Borel measure $\mu$ satisfying the \emph{\textbf{doubling condition }}
\[0<\mu(B(x,2 r))\leq A\mu(B(x,r))<\infty\]
for some constant $A$, for every $x\in X$ and every $r>0$. If  we also have $\mu(\{x\})=0$ for every $x\in X$ then we say that $(X,d,\mu)$ is a \emph{\textbf{non-atomic}} space of homogeneous type. 

We shall now recall a basic
 property of spaces of homogeneous type
that we shall need. This property is actually contained in \cite{CoifWeiss}, and reflects the fact that spaces of homogeneous
type have finite  metric (or Assouad)  dimension (see \cite{Asso}).
The expression \emph{\textbf{finite metric dimension}} means that there exists a constant $N\in\mathbb{N}$
  such that no ball of
radius \mbox{$2r$} contains more than $N$ points of any $r$-disperse
subset of $X$. A set $U$ is said to be  \emph{$\boldsymbol{r}$\textbf{-disperse}}
if $d(x,y)\geq r$ for every $x,y \in U$, $x\neq y$.  An \emph{$\boldsymbol{r}$\textbf{-net}} is  a maximal $r$-disperse set. It is easy to check that
 $U$ is an $r$-net in $X$ if and only if $U$ is an $r$-disperse and  $r$-dense set in $X$, where \emph{$\boldsymbol{r}$\textbf{-dense}} means that for every $x\in X$ there exists $u\in U$ with $d(x,u)<r$.
It is well known that  if a quasi-metric space $(X,d)$ has finite metric dimension, then every bounded subset $F$ of $X$ is totally bounded, so that for every  $r>0$ there exists a finite $r$-net on $F$, whose cardinal depends on $\diam(F)$ and on $r$.

On the other hand, every compact quasi-metric space with finite metric dimension carries a nontrivial doubling measure (see \cite{Wu} or \cite{VK87}).

Let $(X,d,\mu)$ be a  space of homogeneous type.
For a given locally integrable function $f$, the \emph{\textbf{Hardy-Littlewood maximal operator}} is given by
\[\mathcal M_\mu f(x)=\sup \frac{1}{\mu(B)}\int_{B} |f|\,d\mu,\]
where the supremum is taken over the family of the $d$-balls $B$ containing $x$.
Since $\mu$ is doubling then $\|\mathcal M_\mu f\|_{L^p(d\mu)}\leq C_p \|f\|_{L^p(d\mu)}$ for $1<p<\infty$
and $\mu\left(\left\{\mathcal M_\mu f>\lambda\right\}\right)\leq\frac{C}{\lambda}\|f\|_{L^1(d\mu)}$ (see \cite{CoifWeiss}).

The definition of the Hardy-Littlewood maximal operator can be extended to a non-negative Borel measure $\nu$ such that every ball has finite $\nu$-measure
by
\[\mathcal M_\mu\nu(x)= \sup \frac{\nu(B)}{\mu(B)},\]
where the supremum is taken over the family of the $d$-balls $B$ containing $x$. Since $\mu$ is doubling, $\mathcal M_\mu\nu(x)$ is equivalent to its centered version, i.e. \[M_\mu\nu(x)= \sup_{r>0}\frac{\nu(B(x,r))}{\mu(B(x,r))}.\]

When $\nu$ is a finite measure the analogous of the weak type inequality given above shows that $M_\mu \nu$ is finite $\mu$-almost everywhere. We aim to give sufficient conditions other than the finiteness of $\nu$ in order to have the $\mu$-almost everywhere finiteness of $M_\mu \nu$.

We shall deal with spaces of finite Hausdorff dimension. Some comments regarding the terminology are in order.
In the bibliography belonging to geometric measure theory, such as \cite{Falconer}, an $s$-set $E$ is one for which $0< \mathscr H^s(E)<\infty$ where $\mathscr H^s$ is the Hausdorff measure of dimension $s$. In some references related to problems of harmonic analysis and partial differential equations, see for example \cite{Sjoding}, the expression $s$-set is used to name a set that supports a measure $\nu$ for which $\nu(B(x,r))$ behaves as $r^s$ for $r$ small. This condition implies the above one.
On the other hand when dealing with operators such as the Hardy-Littlewood maximal, the global behavior of the given set, aside its local behavior, becomes relevant. In this direction, again, two different terminologies appear in the literature. When there exists a measure $\nu$ such that $\nu(B(x,r))$ behaves as $r^s$ for $r$ up to the diameter of the support of $\nu$, which could be unbounded, the space is said to be $s$-Ahlfors. See for example \cite{Sjoding} and \cite{gromov}.
In \cite{M-S} for a given space of homogeneous type  $(X,d,\mu)$, a quasi-distance $\delta$ can be constructed on $X$ satisfying that the measure of a $\delta$-ball centered at $x$ with radius $r$, behaves as $r$ when $\mu(\{x\})=0$ if $r$ is less than $\mu(X)$. This property is named normality.

For the sake of simplicity we shall adopt along this note the following definitions.

Assume that $(X,d)$ is a fixed quasi-metric space. A subspace $(Y,d)$ of $(X,d)$ is said to be \emph{$\boldsymbol\alpha$\textbf{-Ahlfors with measure} $\boldsymbol\mu$} if $\mu$ is a Borel measure supported in $Y$ such that there exists a constant $c\ge 1$ satisfying the inequalities
 \begin{equation}\label{eq: normal}
c^{-1}r^\alpha\leq \mu(B(x,r))\leq cr^\alpha,
 \end{equation}
for every $x\in Y$ and every $0<r< \diam(Y)$.
 It easy to see that each  $\alpha$-Ahlfors space with measure $\mu$  is  a non-atomic space of
homogeneous type with doubling constant for $\mu$ which  only depends  on
$c$ and $\alpha$.
We will refer to the triangle constant
$K$ and the constants $c$ and $\alpha$ in \eqref{eq: normal} as the \emph{\textbf{geometric constants}} of the space.
 When \eqref{eq: normal} holds only for $0<r<r_0$ for some positive $r_0$, we say that $(Y,d)$ is \emph{\textbf{locally $\boldsymbol\alpha$-Ahlfors with measure $\boldsymbol\mu$}}. The constant $c$ in (\ref{eq: normal}) is said to be  \emph{\textbf{a constant for the Ahlfors condition of} $\boldsymbol\mu$}.

The next proposition shows that both concepts coincide when the space is bounded.

\begin{proposition}
Let  $F$ be a closed and bounded subset of a quasi-metric space $(X,d)$.
If $(F,d)$ is locally $s$-Ahlfors with measure $\nu$, then we have that  $(F,d)$ is
$s$-Ahlfors with measure $\nu$.
\end{proposition}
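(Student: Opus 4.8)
The plan is to split the range of radii at the threshold $r_0$ of the local Ahlfors condition. For $0<r<r_0$ the desired inequalities \eqref{eq: normal} (with $Y=F$, $\alpha=s$ and $\mu=\nu$) hold by hypothesis, so the entire content of the statement is to extend them to the band $r_0\le r<\diam(F)$, which is a \emph{bounded} interval precisely because $F$ is bounded. On this band I would establish the two inequalities separately, the lower one being essentially immediate and the upper one resting on the finiteness of the total mass $\nu(F)$.

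For the lower bound, fix $x\in F$ and $r_0\le r<\diam(F)$. Since $B(x,r)\supseteq B(x,r_0/2)$ and $r_0/2<r_0$, the local lower estimate yields $\nu(B(x,r))\ge\nu(B(x,r_0/2))\ge c^{-1}(r_0/2)^s$. As $r<\diam(F)$ we have $r^s<\diam(F)^s$, hence $\nu(B(x,r))\ge c^{-1}(r_0/2)^s\diam(F)^{-s}\,r^s$, which is the required bound with a new constant depending only on $c$, $s$, $r_0$ and $\diam(F)$. Note that it is exactly the boundedness of $F$ that rescues the lower estimate at the large radii.

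For the upper bound the key point is that $\nu(F)<\infty$. Granting this, for $r_0\le r<\diam(F)$ one has $\nu(B(x,r))\le\nu(F)\le\nu(F)\,(r/r_0)^s=(\nu(F)r_0^{-s})\,r^s$, since $r\ge r_0$. Taking the maximum of $c$ and the two constants produced above then gives a single geometric constant valid for all $0<r<\diam(F)$, which is the assertion. To obtain $\nu(F)<\infty$ I would show that $F$ is totally bounded: the local Ahlfors inequalities make $\nu$ doubling at small scales, and a standard packing argument (over an $(r_0/2)$-disperse set the balls $B(u,r_0/(4K))$ are pairwise disjoint, since $z\in B(u,r_0/(4K))\cap B(u',r_0/(4K))$ would force $d(u,u')<r_0/2$, and each has $\nu$-mass at least $c^{-1}(r_0/(4K))^s$) is meant to yield finite metric dimension for $F$. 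Since $F$ is bounded, the fact quoted above then provides a finite $(r_0/2)$-net $\{x_1,\dots,x_m\}$ of $F$, whence $\nu(F)\le\sum_{i=1}^m\nu(B(x_i,r_0/2))\le m\,c\,(r_0/2)^s<\infty$.

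I expect the finiteness $\nu(F)<\infty$ — equivalently the total boundedness of $F$ — to be the main obstacle, since it is the only place where one must pass from purely local information (radii below $r_0$) to a global conclusion about $F$. The delicate issue is that the Ahlfors bounds control only balls of radius smaller than $r_0$, whereas the packing/covering argument must be arranged so that the enclosing balls also stay below this scale; it is precisely here that the boundedness of $F$ together with the finite metric dimension have to be used with care, and over a genuinely pathological ambient space this hypothesis would need to be invoked explicitly.
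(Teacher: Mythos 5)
Your decomposition of the radii at $r_0$, the lower bound via $B(x,r)\supseteq B(x,r_0/2)$, and the reduction of the upper bound to $\nu(F)<\infty$ all match the skeleton of the paper's proof (which, incidentally, only writes out the upper bound, the lower one being as immediate as you say). The gap is in the one step you yourself single out as delicate: you cannot extract a finite $(r_0/2)$-net of $F$ --- equivalently $\nu(F)<\infty$ --- from the local Ahlfors condition by a packing argument. Your packing facts are correct (the balls $B(u,r_0/(4K))$ over an $(r_0/2)$-disperse set $U$ are pairwise disjoint and each has mass at least $c^{-1}(r_0/(4K))^s$), but to conclude that $U$ is finite you must bound $\nu\bigl(\bigcup_{u\in U}B(u,r_0/(4K))\bigr)$ from above, and the only upper Ahlfors bounds available are for balls of radius less than $r_0$, whereas this union sits inside a set of diameter comparable to $\diam(F)\gg r_0$ whose $\nu$-measure is exactly the quantity you are trying to prove finite. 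The argument is circular, and the conclusion genuinely fails without an extra hypothesis: take $F=X=\bigsqcup_{n\ge 0}I_n$ with each $I_n$ a copy of $[0,\tfrac1{10}]$ carrying Lebesgue measure, $d$ Euclidean inside each $I_n$, $d\equiv\tfrac12$ between points of distinct $I_n,I_m$ with $n,m\ge1$, and $d\equiv1$ between $I_0$ and the rest. This is a bounded metric space ($K=1$, $\diam(F)=1$, balls open), locally $1$-Ahlfors with $r_0=\tfrac1{10}$, yet $\nu(B(x,\tfrac35))=\infty$ for $x\in I_1$, so it is not $1$-Ahlfors.

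What rescues the statement is the finite metric dimension of the \emph{ambient} space $(X,d)$ --- in all of the paper's applications $X$ is $\alpha$-Ahlfors, hence doubling, hence of finite metric dimension --- and this is precisely what the paper's proof invokes: bounded subsets of a space of finite metric dimension are totally bounded, so $F$ admits a finite $(r_0/2)$-net $\{x_1,\dots,x_I\}$ with $I$ depending only on $\diam(F)$ and $r_0$, and then $\nu(B(x,r))\le\sum_{i=1}^I\nu(B(x_i,r_0/2))\le I\,c\,2^{-s}r_0^s\le I\,c\,r^s$ for $r_0\le r<\diam(F)$. So you should import the finite metric dimension hypothesis from the ambient space (it is used in the paper's own proof even though it is not repeated in the statement of the proposition) rather than try to manufacture it from the local Ahlfors condition on $F$, which the example above shows is impossible.
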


\begin{proof} If $\diam(F)\leq r_0$ the result is trivial, so that we shall assume that $\diam(F)>r_0$. Take $x\in F$. If $0<r<r_0$ we use that  $(F,d)$ is locally $s$-Ahlfors with measure $\nu$, so that only we need to consider the case $r_0\leq r<\diam(F)$. Being $F$ a bounded set, since $(X,d)$ has finite metric dimension, there exists a finite $\frac{r_0}{2}$-net $U$ in $F$, let us say $U=\{x_1,\dots, x_I\}$, where $I$ depends only on $\diam(F)$  and $r_0$. Then
\[
  \nu(B(x,r))\leq \sum_{i=1}^I \nu\left(B\left(x_i,\frac{r_0}{2}\right)\right)\leq I c 2^{-s}r_0^{s}< Ic r^{s}.
\]
\end{proof}

The main result of this section, which we shall use in order to apply Theorem~\ref{teo:maximal en A1}  to build Muckenhoupt weights in the next section, is contained in the following proposition.

\begin{proposition}\label{propo: s-sets miden cero}
Let  $(X,d)$ be  $\alpha$-Ahlfors with measure $\mu$ and $F$ closed in $X$. If $(F,d)$ is  $s$-Ahlfors with measure $\nu$, where $0\leq s<\alpha$, then
 $M_\mu\nu(x)<\infty$ for $\mu$-almost every $x\in X$.
\end{proposition}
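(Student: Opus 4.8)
The plan is to reduce the statement to a pointwise estimate away from $F$, after first establishing that $\mu(F)=0$. Indeed, at a point $x\in F$ the lower Ahlfors bound for $\nu$ together with the upper bound for $\mu$ give $\nu(B(x,r))/\mu(B(x,r))\geq C\,r^{-(\alpha-s)}\to\infty$ as $r\to 0$, so $M_\mu\nu=\infty$ on all of $F$; hence the $\mu$-null character of $F$ is precisely what makes the ``almost everywhere'' conclusion possible, and once it is in hand it suffices to bound $M_\mu\nu(x)$ for $x\notin F$.

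To prove $\mu(F)=0$ I would work on the bounded pieces $F\cap B(x_0,R)$ with $x_0\in F$, and let $R\to\infty$ at the end. Fix a small $\varepsilon>0$ and choose a finite $\varepsilon$-net $\{x_1,\dots,x_I\}$ of $F\cap B(x_0,R)$, which exists because bounded subsets of a space of finite metric dimension are totally bounded. The balls $B(x_i,\varepsilon/2)$ are pairwise disjoint (the net is $\varepsilon$-disperse) and centered on $F$, so the lower $s$-Ahlfors bound gives $\nu(B(x_i,\varepsilon/2))\geq c^{-1}(\varepsilon/2)^s$; since they all lie in a fixed dilate of $B(x_0,R)$ of finite $\nu$-measure, this forces $I\leq C\,R^s\varepsilon^{-s}$. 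On the other hand the balls $B(x_i,\varepsilon)$ cover $F\cap B(x_0,R)$, and the upper $\alpha$-Ahlfors bound yields $\mu(B(x_i,\varepsilon))\leq c\,\varepsilon^\alpha$. Combining,
\[
\mu\big(F\cap B(x_0,R)\big)\leq \sum_{i=1}^I\mu(B(x_i,\varepsilon))\leq C\,R^s\,\varepsilon^{\alpha-s}.
\]
Since $\alpha-s>0$, letting $\varepsilon\to 0$ gives $\mu(F\cap B(x_0,R))=0$, and then $R\to\infty$ gives $\mu(F)=0$.

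For the bound off $F$, fix $x\notin F$ and put $\delta=d(x,F)>0$. If $r\leq\delta$ then $B(x,r)\cap F=\emptyset$, so $\nu(B(x,r))=0$ and the quotient vanishes. If $r>\delta$ I would re-center: pick $z\in F$ with $d(x,z)<2\delta<2r$, so that the quasi-triangle inequality gives $B(x,r)\cap F\subseteq B(z,3Kr)$, whence $\nu(B(x,r))=\nu(B(x,r)\cap F)\leq \nu(B(z,3Kr))\leq c\,(3Kr)^s$ by the upper $s$-Ahlfors bound at the point $z\in F$. Using $\mu(B(x,r))\geq c^{-1}r^\alpha$, we get
\[
\frac{\nu(B(x,r))}{\mu(B(x,r))}\leq C\,r^{-(\alpha-s)}\leq C\,\delta^{-(\alpha-s)},
\]
the last inequality because $\alpha>s$ makes the quotient decreasing in $r$, so its supremum over $r>\delta$ is attained as $r\to\delta$. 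Taking the supremum over all $r>0$ then gives $M_\mu\nu(x)\leq C\,d(x,F)^{-(\alpha-s)}<\infty$, which together with $\mu(F)=0$ proves the proposition.

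The step I expect to be most delicate is the re-centering, since the quasi-triangle inequality only controls radii up to the constant $K$, and one must keep the inflated radius $3Kr$ below $\diam(F)$ for the upper $s$-Ahlfors bound to apply. When $F$ (equivalently $\nu$) is unbounded this is automatic and the decay estimate holds for every $r>\delta$. When $F$ is bounded I would treat the range $3Kr\geq\diam(F)$ separately, bounding $\nu(B(x,r))\leq\nu(F)\leq c\,\diam(F)^s<\infty$ while $\mu(B(x,r))$ stays bounded below, so that the quotient is trivially finite in that regime and the conclusion is unaffected. I would also stress that the resulting bound $M_\mu\nu(x)\lesssim d(x,F)^{-(\alpha-s)}$ is not merely qualitative: it is exactly the quantity that, raised to a suitable power and fed into the $A_1$ result for powers of maximal functions, produces the distance weights of the next section.
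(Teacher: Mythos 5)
Your proof is essentially the paper's: the same pointwise bound $M_\mu\nu(x)\le C\,d(x,F)^{s-\alpha}$ off $F$, obtained by re-centering the ball at a nearby point of $F$ and playing the upper $s$-Ahlfors bound for $\nu$ against the lower $\alpha$-Ahlfors bound for $\mu$, together with the same net-counting argument for $\mu(F)=0$. One small quasi-metric slip: an $\varepsilon$-disperse set does \emph{not} make the balls $B(x_i,\varepsilon/2)$ pairwise disjoint when the triangle constant $K$ exceeds $1$ (a common point only forces $d(x_i,x_j)<K\varepsilon$), so you must shrink to $B(x_i,\varepsilon/(2K))$ as the paper does; this affects only constants. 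Your explicit handling of the range $3Kr\geq\diam(F)$, where the upper Ahlfors bound for $\nu$ no longer applies directly, is in fact more careful than the paper, which passes over that case silently.
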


\begin{proof}
If $x\notin F$ and $r\leq d(x,F)$, then $B(x,r)\cap F=\emptyset$ and hence we have $\nu(B(x,r))=0$. So that, in this case, we can assume $r>d(x,F)>0$. Fix $y\in F$ such that $d(x,y)<\frac32 d(x,F)$. Then  $B(x,r)\subseteq B(y,3Kr)$ and
we obtain
\[\frac{\nu(B(x,r))}{\mu(B(x,r))}\leq \frac{\nu(B(y,3Kr))}{\mu(B(x,r))}\leq  c\tilde c(3K)^s r^{s-\alpha}<C d(x,F)^{s-\alpha},\]
with $C=c\tilde c(3K)^s$, where  $K$ denotes the triangle constant for $d$, and
$c$ and $\tilde c$ are constants for the Ahlfors condition of $\mu$ and $\nu$ respectively. In other words, $C$ depends only on the geometric constants of $(X,d)$ and $(F,d)$.
Hence $M_\mu\nu(x)<\infty$ for $x\notin F$.

So we have to prove that $\mu(F)=0$.
Fix $x_0\in F$ and for each natural number $n$ let $F_n=F\cap B(x_0,n)$. Then $\mu(F)\leq \sum_{n=1}^\infty\mu(F_n)$, so that it is enough to prove that
$\mu(F_n)=0$ for every $n$. To see this, fix $n\in\mathbb N$ and $0<\rho <n$.
Since $(X,d)$ has finite metric dimension and $F_n$ is  bounded, there exists a finite $\rho$-net $U$ in $F_n$, let us say $U=\{x_1,x_2,\dots,x_{I_{\rho}}\}$.  Hence $\{B(x_i,\rho): i=1,\dots,I_{\rho}\}$ is a cover of $F_n$, so that $\mu(F_n)\leq \sum_{i=1}^{I_{\rho}}\mu(B(x_i,\rho))\leq c\rho^\alpha I_{\rho}$. To estimate $I_{\rho}$, notice that $B\left(x_i,\frac{\rho}{2K}\right)\cap B\left(x_j,\frac{\rho}{2K}\right)=\emptyset$ for $i\neq j$. Then
\[I_{\rho}\frac{\rho^s}{\tilde c (2K)^s} \le \sum_{i=1}^{I_{\rho}}\nu\left(B\left(x_i,\frac{\rho}{2K}\right)\right)=
\nu\left(\bigcup_{i=1}^{I_{\rho}}B\left(x_i,\frac{\rho}{2K}\right)\right)\leq \nu(B(x_0,2Kn)).\]
So that $I_{\rho}\leq \nu(B(x_0,2Kn))\tilde c (2K)^s\rho^{-s}$  and we have that
\[\mu(F_n)\leq c\tilde c \nu(B(x_0,2Kn))(2K)^{\alpha+s} \rho^{\alpha-s}.\] Taking $\rho\to 0$ we obtain $\mu(F_n)=0$.
\end{proof}

Notice that
if $x\in F$, then we have that
\[\frac{\nu(B(x,r))}{\mu(B(x,r))}\geq \frac{1}{c\tilde c}r^{s-\alpha},\]
and taking supremum over $r>0$ we obtain $M_\mu\nu(x)=\infty$.

\section{Main results}\label{section: main results}

The theory developed by Muckenhoupt in \cite{Muck} provides necessary and sufficient conditions on a weight  $w$ defined on $(\RR^n,|\cdot|,\lambda)$ in order to obtain weighted estimates for the maximal operator.
These functions $w$ are known as $A_p$-Muckenhoupt weights.

Let $(X,d,\mu)$ be a quasi-metric measure space such that every $d$-ball has positive and finite $\mu$-measure. A \textbf{\emph{weight}} $w$  on $X$ is a locally integrable non-negative function defined on $X$. By locally we mean integrable over balls, i.e. $\int_B w\,d\mu<\infty$ for every $d$-ball $B$ in $X$.

For $1<p<\infty$ the \emph{\textbf{Muckenhoupt class $\boldsymbol{A_p(X,d,\mu)}$}} is defined as the set of all weights $w$ defined on $X$ for which there exists a constant $C$ such that the inequality
\[\left(\frac{1}{\mu(B)}\int_B w\,d\mu\right)\left(\frac{1}{\mu(B)}\int_B w^{-\frac{1}{p-1}}\,d\mu\right)^{p-1}
\leq C\]
holds for every $d$-ball $B$ in $X$. For $p=1$, we say that $w\in A_1(X, d,\mu)$ if there exists a constant $C$ such that
 \begin{equation*}
  \frac{1}{\mu(B)}\int_B w\, d\mu\le C\, w(x)
 \end{equation*}
holds for every $d$-ball $B$ in $X$ and $\mu$-almost every $x\in B$. Set $A_\infty(X,d,\mu)=\bigcup_{p\geq 1} A_p(X,d,\mu)$.

\smallskip
 It is a well known  result in the theory of Muckenhoupt weights that if $w\in A_p(X,d,\mu)$, then $wd\mu$ is doubling on $X$ provided that $(X,d,\mu)$ is a space of homogeneous type. The classical reference for the basic theory of Muckenhoupt weights is Chapter~IV in the book~\cite{garcia-rubio}. A celebrated result of the theory proved in $\mathbb R^n$ by P. Jones in \cite{jones}, which extends to space of homogeneous type, is the factorization theorem: every $A_p$ weight $w$ can be written as $w=w_0\, w_1^{1-p}$ with $w_0$ and $w_1$ in $A_1$. This is an important result of the Muckenhoupt weights and it is known as factorization property.

The basic general result which shall be useful for the construction of Muckenhoupt weights as powers of the distance to some particular sets in $X$ is contained in the following statement.

\begin{theorem}\label{teo:maximal en A1}
Let $(X,d,\mu)$ be a space of homogeneous type. Let $\nu$ be a Borel measure
such that
$M_\mu\nu(x)<\infty$ for $\mu$-almost every $x\in X$.   Then  $(M_\mu\nu)^\gamma\in A_1(X,d,\mu)$ for every $0\le \gamma <1$.
\end{theorem}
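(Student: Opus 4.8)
The plan is to follow the classical argument of Coifman and Rochberg. The case $\gamma=0$ is trivial, since then the weight is constantly equal to $1$, so I assume $0<\gamma<1$. I fix an arbitrary $d$-ball $B=B(x_0,r)$ and aim to verify the $A_1$ inequality $\frac{1}{\mu(B)}\int_B (M_\mu\nu)^\gamma\,d\mu\le C\,(M_\mu\nu(x))^\gamma$ for every $x\in B$, with $C$ independent of $B$. Setting $\lambda=2K$, where $K$ is the triangle constant for $d$, I split the measure as $\nu=\nu_1+\nu_2$ with $\nu_1=\nu|_{B(x_0,\lambda r)}$ and $\nu_2=\nu|_{X\setminus B(x_0,\lambda r)}$. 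Since $M_\mu$ is subadditive on measures and $(a+b)^\gamma\le a^\gamma+b^\gamma$ for $0<\gamma<1$, I have $(M_\mu\nu)^\gamma\le (M_\mu\nu_1)^\gamma+(M_\mu\nu_2)^\gamma$, and it suffices to estimate the average over $B$ of each term.

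For the local part I use that $\nu_1$ is a finite measure, since it is supported in a bounded ball, and that the maximal operator of a finite measure is of weak type $(1,1)$ with respect to $\mu$, exactly as recalled after the definition of $M_\mu\nu$. Kolmogorov's inequality then gives, for $0<\gamma<1$,
\[
\frac{1}{\mu(B)}\int_B (M_\mu\nu_1)^\gamma\,d\mu\le C_\gamma\left(\frac{\nu_1(X)}{\mu(B)}\right)^\gamma=C_\gamma\left(\frac{\nu(B(x_0,\lambda r))}{\mu(B)}\right)^\gamma.
\]
To replace the right-hand side by $(M_\mu\nu(x))^\gamma$, I note that for $x\in B$ the quasi-triangle inequality yields $B(x_0,\lambda r)\subseteq B(x,K(1+\lambda)r)$, whence $\nu(B(x_0,\lambda r))\le \nu(B(x,K(1+\lambda)r))\le M_\mu\nu(x)\,\mu(B(x,K(1+\lambda)r))$, and the doubling property bounds $\mu(B(x,K(1+\lambda)r))$ by a constant times $\mu(B)$. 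Thus the local average is at most $C\,(M_\mu\nu(x))^\gamma$ for every $x\in B$.

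The core of the argument, and the step I expect to be the main obstacle, is the far part: I must show that $M_\mu\nu_2$ is essentially constant on $B$, namely $M_\mu\nu_2(x)\le C\,M_\mu\nu(x')$ for all $x,x'\in B$. The point is that any ball $B(x,\rho)$ with $\nu_2(B(x,\rho))>0$ must reach outside $B(x_0,\lambda r)$, and the choice $\lambda=2K$ forces $\rho>r$ through the quasi-triangle inequality. Once $\rho>r$, a second application of the quasi-triangle inequality places $B(x,\rho)$ inside a ball $B(x',\eta\rho)$ centered at any $x'\in B$, with $\eta=K(2K+1)$, and doubling shows $\mu(B(x',\eta\rho))\le C\,\mu(B(x,\rho))$. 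Combining these,
\[
\frac{\nu_2(B(x,\rho))}{\mu(B(x,\rho))}\le\frac{\nu(B(x',\eta\rho))}{\mu(B(x',\eta\rho))}\cdot\frac{\mu(B(x',\eta\rho))}{\mu(B(x,\rho))}\le C\,M_\mu\nu(x'),
\]
and taking the supremum over admissible $\rho$ gives $M_\mu\nu_2(x)\le C\,M_\mu\nu(x')$. Integrating this in $x$ over $B$ produces $\frac{1}{\mu(B)}\int_B(M_\mu\nu_2)^\gamma\,d\mu\le C\,(M_\mu\nu(x'))^\gamma$ for every $x'\in B$.

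Adding the two estimates yields the $A_1$ inequality for $w=(M_\mu\nu)^\gamma$ with a constant depending only on $\gamma$, $K$ and the doubling constant. Finally, local integrability of $w$, which is required for $w$ to be a weight, follows from the same inequality: choosing $x'\in B$ with $M_\mu\nu(x')<\infty$, which is possible by the hypothesis that $M_\mu\nu<\infty$ $\mu$-almost everywhere, gives $\int_B w\,d\mu\le C\,\mu(B)\,(M_\mu\nu(x'))^\gamma<\infty$. The delicate points throughout are keeping all geometric constants uniform in the quasi-metric rather than metric setting and verifying that the radius of every $\nu_2$-charging ball is bounded below, which is exactly where the dilation factor $\lambda$ must be tuned to $K$.
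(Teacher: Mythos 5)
Your proof is correct and follows essentially the same route as the paper's: the same Coifman--Rochberg scheme of splitting $\nu$ at the dilated ball $B(x_0,2Kr_0)$, Kolmogorov's inequality for the local piece, and the lower bound on the radius of any $\nu_2$-charging ball to show $M_\mu\nu_2$ is essentially constant on $B$. Your handling of the Kolmogorov step (using the total mass $\nu_1(X)=\nu(B(x_0,2Kr_0))$ and then doubling) and your explicit check of local integrability are in fact slightly more careful than the paper's write-up, but the argument is the same.
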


The proof of Theorem~\ref{teo:maximal en A1} is based in Kolmogorov inequality. We will follows the lines that in $\mathbb R^n$ for $\nu$ absolutely continuous are given in  \cite{Javi} and for general $\nu$ in $\mathbb R^n$ in   \cite{garcia-rubio}.
However, non trivial technical modification are needed to extend the result to a general space of homogeneous type.

\begin{proof}[Proof of Theorem~\ref{teo:maximal en A1}]
We have to prove that there exists a constant $C$ such that the inequality
\begin{equation}\label{a_1}
\frac{1}{\mu(B)}\int_{B} (M_\mu \nu)^\gamma\, d\mu\le C\, M_\mu \nu(x)^\gamma
\end{equation}
holds for every $d$-ball $B$ in $X$ and $\mu$-almost every $x\in B$.
 Let us
 fix a $d$-ball $B=B(x_0,r_0)$ and  write $\nu=\nu_1+\nu_2$, where $\nu_1$ is the restriction to  $B(x_0,2Kr_0)$  of $\nu$ and $\nu_2=\nu-\nu_1$. Then for $0\le \gamma <1$ we have
\[(M_\mu\nu)^\gamma\le (M_\mu\nu_1)^\gamma + (M_\mu\nu_2)^\gamma,\] and it is enough to prove  (\ref{a_1}) with  $\nu_1$ and $\nu_2$ on the left hand side.

Since  $M_\mu$ is of weak type $(1,1)$ and $\nu_1$ is finite, we can apply the Kolmogorov's inequality  on $(X,d, \mu)$, namely
\[
\int_{E} (M_\mu\nu_1)^\gamma\, d\mu \leq  \mu(E)^{1-\gamma} \nu_1(E)^\gamma=\mu(E)^{1-\gamma} \nu(E\cap B(x_0,2Kr_0) )^\gamma
\]
for every measurable subset $E$ of $X$ with finite measure,
  see for example \cite{DeGuz}, and obtain a constant $C$ depending on $\gamma$ such that
\begin{align*}
 \frac{1}{\mu(B)}\int_{B} (M_\mu\nu_1)^\gamma\, d\mu &
\le C \left(\frac{\nu(B)}{\mu(B)} \right)^\gamma \le C M_\mu\nu(x)^\gamma.
\end{align*}

In order to analyze $M_\mu \nu_2$, note that if $y\in B$ and $B_1=B(x_1,r_1)$ is any $d$-ball containing $y$ with
$\nu_2 (B_1)>0$,  then $r_1>\frac1{2K} r_0$.
In fact, for $z\in B_1\cap (X\setminus B(x_0,2Kr_0))$ we have
\[2 K r_0\le d(z,x_0)\le K(  d(z,y)+ d(y,x_0)) < K(2 K r_1 + r_0).\]
This implies that $B\subseteq B(x_1,5K^3r_1)$. To see this, notice that for  $x\in B$ we have
\[d(x,x_1)\le  K^2(d(x,x_0)+ d(x_0, y) + d(y, x_1))<K^2(2r_0+r_1),\]
so  that $x\in B(x_1,5K^3r_1)$. Therefore
\begin{align*}
\frac{\nu_2(B_1)}{\mu(B_1)}
&\le \frac{\mu(B(x_1,5K^3r_1))}{\mu(B_1)}
 M_\mu \nu(x).
\end{align*}
Since $B_1$ is an arbitrary $d$-ball containing $y$,
\[
M_\mu\nu_2(y)\le\frac{\mu(B(x_1,5K^3r_1))}{\mu(B_1)} M_\mu \nu(x).
\]
Integrating with respect to $y$
we obtain
\begin{equation*}\label{f_2}
 \frac{1}{\mu(B)}\int_{B} (M_\mu\nu_2)^\gamma\, d\mu \le C M_\mu\nu(x)^\gamma
\end{equation*}
and the theorem is proved.
\end{proof}

The first elementary use of
Theorem~\ref{teo:maximal en A1} to produce  weights in the $A_p(X,d,\mu)$ class is obtained taking  $\nu=\delta_0$ the Dirac delta measure in $x_0$. In this case we have
\[
M_\mu\delta_0(x)= \sup_{r>0} \frac{\delta_0(B(x,r))}{\mu(B(x,r))}= \frac{1}{\mu(\overline{B}(x,d(x,x_0))},
\]
where $\overline{B}(x,s)=\{y\in X:d(x,y)\leq s\}$.
Since for the doubling condition we have $\mu(B(x,s))\leq \mu(\overline{B}(x,s))\leq A \mu(B(x,s))$, applying Theorem ~\ref{teo:maximal en A1} we obtain that $\mu(B(x,d(x,x_0)))^{-\gamma}\in A_1(X, d,\mu)$ for $0\le \gamma <1$. Consequently
 $\mu(B(x,d(x,x_0)))^{\beta}\in A_p(X, d,\mu)$ for $-1<\beta < p-1$.

\medskip
As an immediate consequence of Proposition~\ref{propo: s-sets miden cero} and Theorem~\ref{teo:maximal en A1}, we have the following result.

\begin{corollary}\label{coro: maximal finita}
Let  $(X,d)$ be  $\alpha$-Ahlfors with measure $\mu$ and $F$ closed in $X$. If $(F,d)$ is  $s$-Ahlfors with measure $\nu$, where $0\leq s<\alpha$, then
  $(M_\mu\nu)^\gamma\in A_1(X,d,\mu)$ for every $0\le \gamma <1$. The same is true when
 $F=\bigcup_{i=1}^H F_i$ and $\nu=\sum_{i=1}^H \nu_i$, where
$\{F_1,\dots ,F_H\}$ is a family of  pairwise disjoint closed and bounded subsets of $X$ such that $(F_i,d)$ is $s_i$-Ahlfors with measure $\nu_i$, where $0\leq s_i<\alpha$ for $i=1,2,\dots ,H$. Moreover the weight $\max\{M_\mu^\gamma\nu(x),C\} \in A_1(X,d,\mu)$ for every $C>0$.
\end{corollary}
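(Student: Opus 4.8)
The plan is to assemble the two preceding results and then append a short stability observation for the final truncation assertion; the corollary is genuinely immediate once the pieces are lined up.

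First I would dispose of the single set case. By hypothesis $(X,d)$ is $\alpha$-Ahlfors with measure $\mu$, $F$ is closed in $X$, and $(F,d)$ is $s$-Ahlfors with measure $\nu$ where $0\le s<\alpha$. These are precisely the hypotheses of Proposition~\ref{propo: s-sets miden cero}, which yields $M_\mu\nu(x)<\infty$ for $\mu$-almost every $x\in X$. This is exactly the hypothesis required by Theorem~\ref{teo:maximal en A1}, so applying it gives $(M_\mu\nu)^\gamma\in A_1(X,d,\mu)$ for every $0\le\gamma<1$, as claimed.

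For the case $F=\bigcup_{i=1}^H F_i$ with $\nu=\sum_{i=1}^H\nu_i$, I would first check that $M_\mu\nu$ is well defined, i.e.\ that every $d$-ball has finite $\nu$-mass. Each $F_i$ is bounded, so covering $F_i$ by finitely many balls of a fixed small radius (possible since $X$ has finite metric dimension) and bounding the $\nu_i$-mass of each by the Ahlfors condition shows, as in the first Proposition of this section, that $\nu_i(F_i)<\infty$; since $\nu_i$ is supported in $F_i$, every ball has finite $\nu_i$-mass and hence finite $\nu$-mass. Next, for every $x\in X$ and every $r>0$ one has $\nu(B(x,r))=\sum_{i=1}^H\nu_i(B(x,r))$, and taking the supremum over $r$ gives the subadditivity
\[
M_\mu\nu(x)\le\sum_{i=1}^H M_\mu\nu_i(x).
\]
Applying Proposition~\ref{propo: s-sets miden cero} to each pair $(F_i,\nu_i)$ (legitimate, as each $F_i$ is closed and $s_i$-Ahlfors with $0\le s_i<\alpha$) shows $M_\mu\nu_i(x)<\infty$ for $\mu$-almost every $x$. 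A finite sum of functions finite almost everywhere is finite almost everywhere, so $M_\mu\nu(x)<\infty$ for $\mu$-almost every $x$, and Theorem~\ref{teo:maximal en A1} again delivers $(M_\mu\nu)^\gamma\in A_1(X,d,\mu)$.

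Finally, for the truncation $\max\{(M_\mu\nu)^\gamma,C\}$ I would establish the general fact that the maximum of an $A_1$ weight with a positive constant is again in $A_1$. Writing $w=(M_\mu\nu)^\gamma\in A_1$ with constant $C_1$, for any $d$-ball $B$ one has $\max\{w,C\}\le w+C$, whence
\[
\frac{1}{\mu(B)}\int_B\max\{w,C\}\,d\mu\le\frac{1}{\mu(B)}\int_B w\,d\mu+C\le C_1\,w(x)+C
\]
for $\mu$-almost every $x\in B$. Since both $w(x)\le\max\{w(x),C\}$ and $C\le\max\{w(x),C\}$, the right-hand side is at most $(C_1+1)\max\{w(x),C\}$, which is the $A_1$ inequality for $\max\{w,C\}$. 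The only step requiring any work beyond quoting the earlier statements is this last truncation argument (together with verifying the finiteness of $M_\mu\nu$ for the combined measure); the two principal assertions are, as announced, immediate consequences of Proposition~\ref{propo: s-sets miden cero} and Theorem~\ref{teo:maximal en A1}.
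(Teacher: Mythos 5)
Your proof is correct and follows exactly the route the paper intends: the corollary is stated there as an immediate consequence of Proposition~\ref{propo: s-sets miden cero} and Theorem~\ref{teo:maximal en A1}, with no written proof, and your filling-in of the details (subadditivity $M_\mu\nu\le\sum_i M_\mu\nu_i$ for the union case, and the standard observation that the maximum of an $A_1$ weight with a positive constant is again $A_1$) is precisely what is needed. No gaps.
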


In the following result we  explore the behavior of $M_\mu\nu$ to obtain an explicit family of Muckenhoupt weights.
 Here we use the notation $f(x)\simeq g(x)$ to indicate that there exist positive and finite constants $k_1$ and $k_2$ such that $k_1g(x)\leq f(x)\leq k_2g(x)$ for every~$x$.

\begin{theorem}\label{teo: X normal, F s-set}
Let   $(X,d)$ be  $\alpha$-Ahlfors with measure $\mu$. Let $\{F_1,\dots ,F_H\}$ be a family of  pairwise disjoint  closed and bounded  subsets of $X$ such that $(F_i,d)$ is locally $s_i$-Ahlfors with measure $\nu_i$, where $0\leq s_i<\alpha$  for $i=1,2,\dots ,H$. Set $F=\bigcup_{i=1}^H F_i$ and $\nu=\sum_{i=1}^H \nu_i$. Then  there exist  open sets $U_1,\dots, U_H$ pairwise disjoint with $U_i$ containing $F_i$ such that
\begin{enumerate}
\item\label{item: cerca} if  $d(x,F)< 2K\diam(F)$ then we have
\begin{enumerate}
   \item\label{item: en Ui}  $M_\mu\nu(x)\simeq d(x,F_i)^{s_i-\alpha}$, for every $x\in U_i\setminus F_i$, for every $i=1,2,\dots ,H$;
   \item\label{item: fuera de Ui} if $x\notin \bigcup_{i=1}^H U_i$ then $M_\mu\nu(x)\simeq 1$;
 \end{enumerate}
\item\label{item: lejos} if $d(x,F)\geq 2K\diam(F)$ then $M_\mu\nu(x)\simeq d(x,F)^{-\alpha}\simeq d(x,x_0)^{-\alpha}$ for every $x_0\in F$.
\end{enumerate}
\end{theorem}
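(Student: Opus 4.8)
The plan is to realize the sets $U_i$ as fixed-radius tubes around the components $F_i$ and then to compute $M_\mu\nu$ in each regime by optimizing the radius $r$ of the competing balls $B(x,r)$, using throughout that $\mu(B(x,r))\simeq r^\alpha$. As preliminaries I would first invoke the preceding Proposition to upgrade each locally $s_i$-Ahlfors set $F_i$ to a genuinely $s_i$-Ahlfors one, obtaining constants $\tilde c_i$ with $\tilde c_i^{-1}r^{s_i}\le\nu_i(B(x,r))\le\tilde c_i r^{s_i}$ for $x\in F_i$ and $0<r<\diam(F_i)$; in particular each $\nu_i(F_i)$ is finite and positive and $\nu(F)<\infty$. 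Since the $F_i$ are disjoint compact sets, $\eta:=\min_{i\ne j}\dist(F_i,F_j)>0$. I would then fix the single scale $\rho=\min\{\eta/(2K^2),\,\min_i\diam(F_i)\}$ and put $U_i=\{x:d(x,F_i)<\rho\}$. A one-line quasi-triangle estimate shows the $U_i$ are open, contain $F_i$, are pairwise disjoint, and, crucially, that each $x\in U_i$ has $d(x,F_j)\ge\eta/(2K)$ for every $j\ne i$.

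The heart of the matter is part (i)(a). Write $\delta=d(x,F_i)\in(0,\rho)$ and split $M_\mu\nu(x)\le\sup_r\frac{\nu_i(B(x,r))}{\mu(B(x,r))}+\sum_{j\ne i}\sup_r\frac{\nu_j(B(x,r))}{\mu(B(x,r))}$. Picking a near-closest point $a\in F_i$ and using the containment $B(x,r)\subseteq B(a,\tfrac52 Kr)$ exactly as in Proposition~\ref{propo: s-sets miden cero}, the $\nu_i$-term vanishes for $r\le\delta$ and is $\lesssim r^{s_i-\alpha}$ for $r>\delta$; since $s_i<\alpha$ this is decreasing in $r$, so its supremum is $\simeq\delta^{s_i-\alpha}$. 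Each $\nu_j$-term with $j\ne i$ only sees balls of radius $r>d(x,F_j)\ge\eta/(2K)$, so it is at most the constant $\nu_j(F_j)(c^{-1}(\eta/2K)^\alpha)^{-1}$, which because $\delta\le\rho$ is $\lesssim\delta^{s_i-\alpha}$. For the reverse inequality I would test the single ball $B(x,2K^2\delta)\supseteq B(a,\delta/(2K))$: the Ahlfors lower bound gives $\nu(B(x,2K^2\delta))\ge\nu_i(B(a,\delta/(2K)))\gtrsim\delta^{s_i}$ while $\mu(B(x,2K^2\delta))\lesssim\delta^\alpha$, so the ratio is $\gtrsim\delta^{s_i-\alpha}$. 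Together these give $M_\mu\nu(x)\simeq d(x,F_i)^{s_i-\alpha}$.

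The remaining regimes both rest on the fact that $\nu(B(x,r))$ saturates to the finite constant $\nu(F)$ once $r$ is large enough to engulf $F$. In part (i)(b) one has $\rho\le d(x,F)<2K\diam(F)$, so the distances range over a bounded interval; the bound $\nu(B(x,r))\le\nu(F)$ together with $\mu(B(x,r))\gtrsim\rho^\alpha$ for the only contributing radii $r>d(x,F)$ gives the upper estimate, and testing a fixed radius $r^*\simeq\diam(F)$ that contains $F$ gives the lower one, so $M_\mu\nu(x)\simeq1$. For part (ii), set $D=d(x,F)\ge2K\diam(F)$; the hypothesis $\diam(F)\le D/(2K)$ forces $d(x,x_0)\simeq D$ for every $x_0\in F$ by a single quasi-triangle inequality. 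Only radii $r>D$ contribute, giving $\nu(B(x,r))/\mu(B(x,r))\le c\,\nu(F) D^{-\alpha}$, while the ball $B(x,\tfrac{3K+1}{2}D)$ already contains $F$ and produces a matching lower bound; hence $M_\mu\nu(x)\simeq D^{-\alpha}\simeq d(x,x_0)^{-\alpha}$.

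I expect the one genuinely delicate step to be part (i)(a): one must at once suppress the long-range contributions of the other measures $\nu_j$ and the non-local tail of $\nu_i$, and still extract the sharp local exponent $\delta^{s_i-\alpha}$ from the truly fractal scale, all with constants uniform over $U_i$. It is precisely this balance that dictates the admissible size of the single scale $\rho$ defining the tubes, and the separation $\eta>0$ of the compact components is what renders the cross terms harmless.
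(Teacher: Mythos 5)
Your proof is correct and follows essentially the same route as the paper's: tubes around each $F_i$ whose width is controlled by the separation of the components, the same test balls $B(x,CKd(x,F_i))$ and $B(x,C K^2\diam(F))$ for the lower bounds, and the same case analysis on the radius (small balls seeing only $\nu_i$ via the Ahlfors bounds, large balls saturating at $\nu(F)$) for the upper bounds — the only organizational difference being that you absorb the large-radius and cross-term constants into $\delta^{s_i-\alpha}$ using $\delta<\rho$, where the paper instead shrinks the tube radii $\kappa_i$ so that the local bound dominates outright. One small repair: take $\rho\le\min_i r_i$ (the radii from the local Ahlfors condition, as the paper does) rather than $\min_i\diam(F_i)$, since a component $F_i$ reduced to a single point (allowed when $s_i=0$) has $\diam(F_i)=0$ and would make your $U_i$ empty, violating $F_i\subseteq U_i$.
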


\begin{proof} Let us start by proving (\ref{item: lejos}).
 Let us fix $x$ such that $d(x,F)\geq 2K\diam(F)$. Since $F$ is bounded, we only need to prove   the first equi\-va\-lence in (\ref{item: lejos}).
 Since for $r\leq d(x,F)$ we have $B(x,r)\cap F=\emptyset$, in order to estimate $M_\mu\nu(x)$ we only have to consider balls centered at $x$ with radius $r>d(x,F)$.
 Hence
 \[\frac{\nu(B(x,r))}{\mu(B(x,r))}\leq \frac{c\nu(F)}{d(x,F)^\alpha},\]
 where $c$ denotes a constant for the Ahlfors condition of $\mu$.

 On the other hand, taking $r=3Kd(x,F)$ we obtain
 \[M_\mu\nu(x)\geq \frac{\nu(B(x,3Kd(x,F)))}{c(3Kd(x,F))^\alpha}. \]
Now fix $y\in F$  such that $d(x,y_1)<\frac32 d(x,F)$. Then \[B(y,2K\diam(F))\subseteq B(y,d(x,F))\subseteq B(x,3Kd(x,F)),\]
 and therefore
 \[M_\mu\nu(x)\geq \frac{\nu(B(y,2K\diam(F)))}{c(3Kd(x,F))^\alpha}= \frac{\nu(F)}{c(3Kd(x,F))^\alpha}. \]


To prove (\ref{item: cerca}), let $c_i$ be the constant for the Ahlfors condition of $\nu_i$. In other words, $c_i$ is a constant satisfying
\[c_i^{-1}r^{s_i}\leq \nu_i(B(y,r))\leq c_i r^{s_i},\]
for every $0< r\leq r_i$ and every $y\in F_i$, for some $r_i>0$.
Let us define $\Delta=\min\{d(F_i,F_j): i\neq j\}$ and
\[U_i=\left\{x\in X: d(x,F_i)<\kappa_i\right\},\]
where $\kappa_i\leq \min\{2r_i,\Delta/2\}$ is a constant that we shall define later.
Notice that $U_i\cap U_j=\emptyset$ for $i\neq j$.

In order to show  (\ref{item: en Ui}), let us fix $i\in\{1,\dots,H\}$ and $x\in U_i\setminus F_i$.
 To obtain the lower bound, let $y\in F_i$ such that $d(x,y)<\frac{3}{2}d(x,F_i)$. Then
 \begin{align*}
M_\mu\nu(x)&\geq \frac{\nu\left(B\left(x,2Kd(x,F_i)\right)\right)}{\mu\left(B\left(x,2Kd(x,F_i)\right)\right)}
\\
&\geq
 \frac{\nu_i\left(B\left(x,2Kd(x,F_i)\right)\right)}{\mu\left(B\left(x,2Kd(x,F_i)\right)\right)}\\
&\geq  \frac{\nu_i\left(B\left(y,\frac{1}{2}d(x,F_i)\right)\right)}{c (2Kd(x,F_i))^\alpha}
\\
&\geq  C_i d(x,F_i)^{s_i-\alpha},
  \end{align*}
where $ C_i=\frac{1}{c c_i 2^{s_i}(2K)^\alpha}$.

To obtain the upper bound, let us keep consider the case $x\in U_i\setminus F_i$. Hence if $r\leq d(x,F_i)$ we have that $r<\Delta/2$, so that $B(x,r)\cap F=\emptyset$ and $\nu(B(x,r))=0$. Assume then
 $r>d(x,F_i)$ and
 fix $y\in F_i$ such that $d(x,y)<r$. Then
\[\frac{\nu(B(x,r))}{\mu(B(x,r))}\leq \frac{\nu(B(y,2Kr))}{\mu(B(x,r))}\leq \frac{c \nu(B(y,2Kr))}{r^\alpha}.\]
We shall consider two cases taking into account the size of $r$.

\noindent\textbf{Case 1: } $d(x,F_i)<r<\frac{\Delta}{2K}$. In this case we have $2Kr<\Delta$, so that $B(y,2Kr)\cap F_j=\emptyset$ for every $j\neq i$. Then
\[\frac{\nu(B(y,2Kr))}{r^\alpha}=\frac{\nu_i(B(y,2Kr))}{r^\alpha}\leq c_i (2Kr)^{s_i} r^{-\alpha}<c_i (2K)^{s_i} d(x,F_i)^{s_i-\alpha}.\]

\noindent\textbf{Case 2: } $r\geq\frac{\Delta}{2K}$. In this case
 \[\frac{\nu(B(x,r))}{\mu(B(x,r))}\leq  \frac{c \nu(F)}{r^\alpha}\leq c\left(\frac{2K}{\Delta}\right)^\alpha\nu(F).\]

To obtain the result we only need to take $x$ such that $c_i (2K)^{s_i} d(x,F_i)^{s_i-\alpha}\geq c\left(\frac{2K}{\Delta}\right)^\alpha\nu(F)$. In other words, it is enough to take
\[\kappa_i=\min\left\{2r_i,\frac{\Delta}{2}, \left(\frac{c2^\alpha K^\alpha\nu(F)}{c_i\Delta^\alpha2^{s_i} K^{s_i}}\right)^{\frac{1}{s_i-\alpha}}\right\}\]
in the definition of $U_i$.

Finally, to prove (\ref{item: fuera de Ui}) take $x\notin\bigcup_{i=1}^H U_i$ such that $d(x,F)< 2K\diam(F)$. Then
$d(x,F_i)\geq\kappa_i$ for each $i$, so that $d(x,F)\geq \min \kappa_i=:\kappa$.  As before, in order to estimate $M_\mu \nu(x)$ we only need to consider $r>d(x,F)\geq \kappa$. In this case
\[\frac{\nu(B(x,r))}{\mu(B(x,r))}\leq \frac{c \nu(F)}{\kappa^\alpha}.\]
On the other hand, since $d(x,F)< 2K\diam(F)$ we have  $F\subseteq B(x,3K^2\diam(F))$ and therefore
\[M_\mu\nu(x)\geq \frac{\nu(B(x,3K^2\diam(F)))}{\mu(B(x,3K^2\diam(F)))}\geq \frac{\nu(F)}{c(3K^2\diam(F))^\alpha}.\]
\end{proof}


As a consequence of Corollary~\ref{coro: maximal finita} and Theorem~\ref{teo: X normal, F s-set}, we have the following result.

\begin{theorem}\label{teo: peso en A1 para alpha-normales multiple}
Let   $(X,d)$ be  $\alpha$-Ahlfors with measure $\mu$. Let $\{F_1,\dots ,F_H\}$ be a family of  pairwise disjoint  closed and bounded  subsets of $X$ such that $(F_i,d)$ is a locally $s_i$-Ahlfors space with measure $\nu_i$, where $0\leq s_i<\alpha$  for $i=1,2,\dots ,H$.  Then
\begin{enumerate}
 \item \label{item1}
\[
w(x)=\left\{
\begin{array}{ll}
d(x,F_i)^{(s_i-\alpha)\gamma},& \textrm{ for } x\in U_i ;\\
1,& \textrm{ for } x\in \left(\bigcup_{i=1}^H U_i\right)^c
\end{array}
\right.\]
belong to $ A_1(X, d, \mu)$ for every $0\leq \gamma<1$, where $U_i$ is the open set containing $F_i$   given by Theorem \ref{teo: X normal, F s-set};

\item \label{item2}
\[
v(x)=\left\{
\begin{array}{ll}
d(x,F_i)^{\beta_i},& \textrm{ for } x\in U_i ;\\
1,& \textrm{ for } x\in \left(\bigcup_{i=1}^H U_i\right)^c
\end{array}
\right.\]
belongs to $A_p(X, d, \mu)$ for every $-(\alpha -s_i)< \beta_i<(\alpha -s_i)(p-1).$
\end{enumerate}
\end{theorem}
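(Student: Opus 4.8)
The plan is to derive both statements from Corollary~\ref{coro: maximal finita} and Theorem~\ref{teo: X normal, F s-set}, exploiting that the Muckenhoupt classes are stable under pointwise equivalence of weights and that a product $w_0 w_1^{1-p}$ of $A_1$ weights lies in $A_p$. Throughout I write $\delta_i=\alpha-s_i>0$, and I recall that, each $F_i$ being closed and bounded, its local $s_i$-Ahlfors regularity upgrades to genuine $s_i$-Ahlfors regularity, so that Corollary~\ref{coro: maximal finita} applies.

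For (\ref{item1}) I would first show that $w\simeq\max\{(M_\mu\nu)^\gamma,c_0\}$ $\mu$-a.e. for a suitable constant $c_0>0$. Indeed, by parts (\ref{item: en Ui}) and (\ref{item: fuera de Ui}) of Theorem~\ref{teo: X normal, F s-set}, on the bounded region $\{x:d(x,F)<2K\diam(F)\}$ we have $(M_\mu\nu)^\gamma(x)\simeq d(x,F_i)^{(s_i-\alpha)\gamma}$ on each $U_i$ and $(M_\mu\nu)^\gamma(x)\simeq 1$ off $\bigcup_i U_i$, hence $(M_\mu\nu)^\gamma\simeq w$ there; while by part (\ref{item: lejos}), on $\{x:d(x,F)\geq 2K\diam(F)\}$ we have $w\equiv 1$ and $(M_\mu\nu)^\gamma(x)\simeq d(x,F)^{-\alpha\gamma}$, a quantity that is bounded above and tends to $0$. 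Choosing $c_0$ comparable to the constant value of $(M_\mu\nu)^\gamma$ off the $U_i$ then gives the asserted equivalence. Since $\max\{(M_\mu\nu)^\gamma,c_0\}\in A_1(X,d,\mu)$ by the last assertion of Corollary~\ref{coro: maximal finita}, and since replacing a weight by an equivalent one only alters the $A_1$ constant, I conclude $w\in A_1(X,d,\mu)$.

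For (\ref{item2}) I would use the elementary half of the factorization property (if $w_0,w_1\in A_1$ then $w_0 w_1^{1-p}\in A_p$): it suffices to write $v=w_0 w_1^{1-p}$ with $w_0,w_1\in A_1(X,d,\mu)$. For each $i$ pick $\gamma_0^{(i)},\gamma_1^{(i)}\in[0,1)$ and set $w_0$ (resp. $w_1$) equal to $d(x,F_i)^{-\delta_i\gamma_0^{(i)}}$ (resp. $d(x,F_i)^{-\delta_i\gamma_1^{(i)}}$) on $U_i$ and to $1$ off $\bigcup_i U_i$. Because the $U_i$ are pairwise disjoint, $v=w_0 w_1^{1-p}$ holds identically once the exponents match, i.e. $\beta_i=\delta_i(\gamma_1^{(i)}(p-1)-\gamma_0^{(i)})$ on $U_i$. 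As $\gamma_0^{(i)},\gamma_1^{(i)}$ run over $[0,1)$ the factor $\gamma_1^{(i)}(p-1)-\gamma_0^{(i)}$ sweeps the open interval $(-1,p-1)$, so $\beta_i$ attains precisely every value in $(-(\alpha-s_i),(\alpha-s_i)(p-1))$; this is where the stated range is used, and where the freedom of choosing a different pair for each $i$ is essential.

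The remaining and, I expect, genuinely delicate point is that $w_0$ and $w_1$ belong to $A_1(X,d,\mu)$. This does not follow from (\ref{item1}), whose argument runs through $(M_\mu\nu)^\gamma$ and thereby imposes one common exponent on all the $U_i$. I would instead argue componentwise and then glue. Each single-component weight $u_i$, equal to $d(x,F_i)^{-\delta_i\gamma_0^{(i)}}$ on $U_i$ and $1$ elsewhere, is equivalent to $\max\{(M_\mu\nu_i)^{\gamma_0^{(i)}},c\}$ — by Theorem~\ref{teo: X normal, F s-set} applied to the single set $F_i$, together with the fact that $u_i$ is comparable to a constant away from $F_i$ — and hence lies in $A_1$ by Corollary~\ref{coro: maximal finita}. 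To pass from the $u_i$ to $w_0=\prod_i u_i$ I would test the $A_1$ inequality on an arbitrary ball $B$: balls meeting at most one $U_i$ reduce the average of $w_0$ to the average of a single $u_i$, for which the inequality is known; and the balls meeting several $U_i$ are necessarily large, so that $\mu(B)$ is bounded below, each $\int_{U_i}u_i\,d\mu$ is finite (this uses $\gamma_0^{(i)}<1$, which guarantees integrability of the singularity of $u_i$ along $F_i$), and $w_0$ is bounded below by a positive constant; these three facts bound the leftover averages uniformly by a constant multiple of $w_0(x)$. The identical reasoning gives $w_1\in A_1$, and the factorization property then yields $v\in A_p(X,d,\mu)$.
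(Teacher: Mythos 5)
Your treatment of item (\ref{item1}) is essentially the paper's own proof: both arguments establish the pointwise equivalence $w\simeq\max\{(M_\mu\nu)^\gamma,C\}$ by running through the three regimes of Theorem~\ref{teo: X normal, F s-set} (near $F_i$ inside $U_i$, off $\bigcup_i U_i$ but within $2K\diam(F)$ of $F$, and far from $F$) and then invoke the last assertion of Corollary~\ref{coro: maximal finita}; your case analysis matches the one in the paper.

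For item (\ref{item2}) you depart from the paper, and for good reason. The paper disposes of it in one line, as an immediate consequence of item (\ref{item1}) and the definition of the Muckenhoupt class; but, as you observe, item (\ref{item1}) only hands you weights with a single exponent $\gamma$ common to all the $U_i$, and feeding two such weights into the reverse factorization $w_0w_1^{1-p}\in A_p$ produces only those $v$ for which the ratios $\beta_i/(\alpha-s_i)$ coincide for all $i$, not arbitrary independent $\beta_i$ in their respective ranges. Your componentwise construction of the two $A_1$ factors, followed by a gluing argument over balls, genuinely fills this gap and is the more complete route. One step of the gluing does need repair: it is not true that a ball meeting two distinct $U_i$ must be large, because two of the sets $U_i=\{x:d(x,F_i)<\kappa_i\}$ can be at zero distance from one another (e.g.\ two adjacent intervals on the line sharing an endpoint). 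The fix is to note that if $B$ has small radius and meets $U_i$ and $U_j$ with $i\neq j$, then every point of $B$ lies at distance comparable to $\Delta=\min_{k\neq l}d(F_k,F_l)$ from every $F_k$ (after shrinking the $\kappa_i$ by a constant depending on the triangle constant $K$, if necessary), so $w_0$ is comparable to a constant on $B$ and the $A_1$ inequality is trivial there; the correct dichotomy is ``$B$ comes close to some $F_i$'' versus ``$B$ stays far from all of them,'' rather than small versus large. With that adjustment, and noting that $\int_{U_i}u_i\,d\mu<\infty$ follows from the local integrability of the $A_1$ weight $u_i$ over the bounded set $U_i$, your argument is correct.
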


\begin{proof}
Since the equivalence $w_1\simeq w_2$ preserves $A_1(X, d,\mu)$, in order to prove \eqref{item1}, from Corollary \ref{coro: maximal finita} we only need to check that $w(x)\simeq \max\{M^\gamma_\mu \nu (x), C\}=: u(x)$ for some positive constant $C$.

As  we saw in item \eqref{item: en Ui} in Theorem~\ref{teo: X normal, F s-set}, for each $i=1,...,H$, there exists a constant  $ \tilde{C}$ such that if $x\in U_i\setminus F_i$ then $M_\mu\nu(x)\geq  \tilde C d(x,F_i)^{s_i-\alpha}$ and, since  $d(x,F_i)\leq 2r_i$, taking $s^*=\max\{s_1,\dots,s_H\}$ and $r^*= \max\{r_1,\dots,r_H\}$, we have that $M^\gamma_\mu\nu(x)\geq \tilde C^\gamma (2r^*)^{\gamma(s^*-\alpha)}$.

%
 Then, if we will consider $C:=\tilde C^\gamma (2r^*)^{\gamma(s^*-\alpha)}$ in the definition of $u$ we obtain that  for $x\in \bigcup_{i=1}^H U_i\setminus F_i$ we have
%
%
%
\[u(x)=M_\mu^\gamma\nu(x)\simeq d(x,F_i)^{(s_i-\alpha)\gamma}=w(x).\]

For the case of $x\notin \bigcup_{i=1}^H U_i$,
$w(x)=1$, and to see that $u(x)\simeq 1$ we shall consider the following possibilities for  $x$:
\begin{itemize}
\item $d(x,F)<2K\diam(F)$. By \eqref{item: lejos} of Theorem \ref{teo: X normal, F s-set}, $M_\mu^\gamma\nu(x)\simeq 1$ and  we have that $u(x)\simeq w(x)$, not depending on the value of $u(x)$.

%
\item $2K\diam(F)<d(x,F)\leq\Lambda$, with \[\Lambda=\max\left\{2K\diam(F), \left(\frac{c\nu(F)}{C^{1/\gamma}}\right)^{1/\alpha}\right\}\] and  $c$ a constant for the Ahlfors condition of $\mu$. Item~(\ref{item: lejos}) of Theorem~\ref{teo: X normal, F s-set} together with the bounds for  $d(x,F)$ implies that $M_\mu^\gamma\nu(x)\simeq d(x,F)^{-\alpha\gamma}\simeq 1$. Then, as in the previous case, $u(x)\simeq 1$.
\item $d(x,F)>\Lambda$. In the proof of (\ref{item: lejos}) in  Theorem~\ref{teo: X normal, F s-set} we can see that $M_\mu\nu(x)\leq \frac{c\nu(F)}{d(x,F)^\alpha}$. Then
    \[M_\mu^\gamma\nu(x)\leq \left(\frac{c\nu(F)}{\Lambda^\alpha}\right)^\gamma\leq C.\]
Hence $u(x)=C\simeq 1$.
\end{itemize}

Item \eqref{item2} is an immediate consequence of item \eqref{item1} and the definition of Muckenhoupt class.
\end{proof}

In the particular case of an $s$-Ahlfors space $F$, we can  take out the restriction of boundedness and we obtain the following result.

\begin{theorem}\label{coro: peso en A1 para alpha-normales}
Let  $(X,d)$ be  $\alpha$-Ahlfors with measure $\mu$ and $F$ closed in $X$. If $(F,d)$ is  $s$-Ahlfors with measure $\nu$, where $0\leq s<\alpha$, then
$d(x,F)^{\gamma(s-\alpha)}\in A_1(X, d, \mu)$ for every $0\leq \gamma<1$. Consequently
  $d(x,F)^{\beta}\in A_p(X, d,\mu)$ for $-(\alpha-s)<\beta < (\alpha-s)(p-1)$.
\end{theorem}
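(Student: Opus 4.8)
The plan is to reduce the statement to Theorem~\ref{teo:maximal en A1} by showing that, off $F$, the maximal function $M_\mu\nu$ is pointwise comparable to a power of the distance to $F$; concretely, I would first establish the global equivalence
\[
M_\mu\nu(x)\simeq d(x,F)^{s-\alpha}\qquad\text{for every }x\in X\setminus F.
\]
This is the single-set analogue of item~(\ref{item: en Ui}) of Theorem~\ref{teo: X normal, F s-set}, but now \emph{valid at all scales} rather than only inside a cutoff neighborhood $U_i$. The reason the local restriction can be dropped is precisely that $(F,d)$ is assumed $s$-Ahlfors globally, so the two-sided bound $\tilde c^{-1}\rho^{s}\le\nu(B(y,\rho))\le\tilde c\,\rho^{s}$ holds for every $y\in F$ and every $0<\rho<\diam(F)$, the latter being possibly infinite when $F$ is unbounded.

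For the upper bound I would argue as in Proposition~\ref{propo: s-sets miden cero}: for $x\notin F$ only balls $B(x,r)$ with $r>d(x,F)$ carry $\nu$-mass, and choosing $y\in F$ with $d(x,y)<\tfrac32 d(x,F)$ gives $B(x,r)\subseteq B(y,3Kr)$, whence
\[
\frac{\nu(B(x,r))}{\mu(B(x,r))}\le\frac{\tilde c\,(3Kr)^{s}}{c^{-1}r^{\alpha}}=c\tilde c\,(3K)^{s}r^{s-\alpha}\le c\tilde c\,(3K)^{s}\,d(x,F)^{s-\alpha},
\]
where the last inequality uses $s-\alpha<0$ together with $r>d(x,F)$. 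For the lower bound I would test $M_\mu\nu$ against the single ball $B(x,2Kd(x,F))$, which contains $B\bigl(y,\tfrac12 d(x,F)\bigr)$; the lower Ahlfors bound for $\nu$ and the upper one for $\mu$ then give $M_\mu\nu(x)\ge\bigl(c\tilde c\,2^{s}(2K)^{\alpha}\bigr)^{-1}d(x,F)^{s-\alpha}$.

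Granting the equivalence and recalling from Proposition~\ref{propo: s-sets miden cero} that $\mu(F)=0$, it follows that $d(\cdot,F)^{\gamma(s-\alpha)}\simeq(M_\mu\nu)^{\gamma}$ holds $\mu$-almost everywhere for each $0\le\gamma<1$. Since $M_\mu\nu<\infty$ $\mu$-a.e.\ by the same proposition, Theorem~\ref{teo:maximal en A1} yields $(M_\mu\nu)^{\gamma}\in A_1(X,d,\mu)$, and because $A_1$ is invariant under replacing a weight by a comparable one, we conclude $d(\cdot,F)^{\gamma(s-\alpha)}\in A_1(X,d,\mu)$. For the $A_p$ conclusion I would use the elementary direction of the factorization property: given $\beta$ with $-(\alpha-s)<\beta<(\alpha-s)(p-1)$, I would pick $\gamma_0,\gamma_1\in[0,1)$ so that $\beta=(s-\alpha)\bigl(\gamma_0-(p-1)\gamma_1\bigr)$, which is possible exactly because $\gamma_0-(p-1)\gamma_1$ ranges over $(-(p-1),1)$ and $s-\alpha<0$; then $d(\cdot,F)^{\beta}=w_0\,w_1^{1-p}$ with $w_0=d(\cdot,F)^{\gamma_0(s-\alpha)}$ and $w_1=d(\cdot,F)^{\gamma_1(s-\alpha)}$ both in $A_1$, and a direct check of the $A_p$ quotient using the $A_1$ bounds for $w_0$ and $w_1$ gives $d(\cdot,F)^{\beta}\in A_p(X,d,\mu)$.

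The main obstacle I anticipate is the upper bound in the global equivalence at large radii. If $F$ had finite diameter, then for $r$ comparable to or larger than $\diam(F)$ the ball $B(y,3Kr)$ would already exhaust $\nu(F)$, and the estimate would degrade to $\nu(F)\,r^{-\alpha}$ instead of $r^{s-\alpha}$, producing the far-field behavior $d(\cdot,F)^{-\alpha}$ of Theorem~\ref{teo: X normal, F s-set}(\ref{item: lejos}) and forcing the cutoff sets $U_i$. Thus the precise point at which boundedness is genuinely removed is the persistence of the upper Ahlfors bound for $\nu$ up to the (possibly infinite) diameter of $F$, and this is the step to carry out with care.
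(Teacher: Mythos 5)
Your proposal is correct and takes essentially the same route as the paper: the paper's proof also establishes $M_\mu\nu(x)\simeq d(x,F)^{s-\alpha}$ off $F$ (upper bound from Proposition~\ref{propo: s-sets miden cero}, lower bound by testing on $B(x,2Kd(x,F))\supseteq B\left(y,\tfrac12 d(x,F)\right)$), notes $\mu(F)=0$, and invokes Theorem~\ref{teo:maximal en A1}, with the $A_p$ statement following by factorization exactly as you describe. Your closing remark correctly identifies that the global (rather than local) Ahlfors hypothesis is what removes the cutoff sets, though the step that genuinely fails for bounded $F$ at large distances is the lower bound (where $\nu\left(B\left(y,\tfrac12 d(x,F)\right)\right)$ saturates at $\nu(F)$), not the upper one.
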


\begin{proof}
 The results follows if we prove that $M_\mu\nu(x)\simeq d(x,F)^{s-\alpha}$ for $\mu$-almost every $x\in X$.

Take $x\notin F$.
The upper bound was proved in Proposition \ref{propo: s-sets miden cero}. To obtain the lower bound, let $y\in F$ be such that $d(x,y)<\frac32 d(x,F)$. Then
\[
M_\mu\nu(x)\geq
 \frac{\nu\left(B\left(x,2Kd(x,F)\right)\right)}{\mu\left(B\left(x,2Kd(x,F)\right)\right)}
\geq  \frac{ \nu\left(B\left(y,\frac{1}{2}d(x,F)\right)\right)}{c(2Kd(x,F))^\alpha}
\geq C  d(x,F)^{s-\alpha},
\]
where $C=\frac{c}{\tilde{c} (2K)^\alpha 2^s}$, with $c$ and $\tilde c$  constants for the Ahlfors condition of $\mu$ and $\nu$ respectively.

For $x\in F$, as we proved in Proposition \ref{propo: s-sets miden cero},  $M_\mu\nu(x)=\infty$ but $\mu(F)=0$.
\end{proof}

\section{Applications}\label{section: applications}
This section contain three topics. The first one deal with weighted Sobolev regularity for solutions of elliptic partial differential equations of large order.

Second we give a criteria to decide wether or not a given set is locally $s$-Ahlfors in terms of the $s$-dimensional Hausdorff measure.

Third, we provide  some non trivial $A_p$ weights on the Sierpinski gasket built from Theorem~\ref{coro: peso en A1 para alpha-normales}.\\

\noindent\textbf{Weighted Sobolev regularity. }
 Let $(X, d)$  be  $\alpha$-Ahlfors with measure $\mu$ and $\Omega$ subset of $X$ with $\diam (\Omega) <\infty$.

Given an operator $R: \Omega\times \Omega\longrightarrow \RR$ and $f:\Omega\longrightarrow \RR$ we define
\[
 u(x):=\int_{\Omega} R(x,y)\, f(y)\, d\mu(y).
\]

If there exists a positive constant $C$ such that
\begin{equation}\label{cotaR}
 |R(x,y)|\le C d(x,y)^{-\alpha+1},
\end{equation}
we can see that $u$ satisfies
\[
| u(x)|\le C \,   M_\mu f(x),
\]
where $M_\mu f(x)$ is the Hardy-Littlewood maximal operator in $(X,d,\mu)$.
In fact, if we denote $\delta$ the diameter of $\Omega$ and using  (\ref{cotaR})
 we have
\begin{eqnarray*}
 |u(x)|&\le& C \int_{d(x,y)\le\delta} \frac{|f(y)|}{d(x,y)^{\alpha-1}}\, d\mu(y)
\\
&=& C\, \sum_{k=0}^{\infty}\int_{\{{2^{-(k+1)}\delta\le d(x,y)\le 2^{-k}\delta}\}} \frac{|f(y)|}{d(x,y)^{\alpha-1}}\, d\mu(y)
\\
&\le & C\, 2^{\alpha-1} \sum_{k=0}^{\infty} \frac{2^{-k} }{\mu(B(x,2^{-k}\delta))}\int_{\{d(x,y)\le 2^{-k}\delta\}} |f(y)| d\mu(y)
\\
&\le& C \,   M_\mu f(x).
\end{eqnarray*}

Since $M_\mu f(x)$  is a bounded operator in
$L^p(\Omega,wd\mu)$  if $w\in A_p(X,d,\mu)$,
follows immediately that
\begin{equation}\label{estimate_for_the_solution}
\|u\|_{L^p(\Omega,wd\mu)}
\le C\, \|f\|_{L^p(\Omega,wd\mu)}
\end{equation}
i.e. there exists a positive constant $C$ such that
\[
 \left(\int_\Omega |u|^p\, w\, d\mu\right)^{1/p}\le C\, \left( \int_\Omega |f|^p\, w\, d\mu\right)^{1/p},
\]
provided that $w\in A_p(X,d,\mu)$.

As in general we are interested in sources $f$ as singular as possible the above inequality allows unbounded growth of $f$ close to some subset of $\Omega$ if the weight $w$ vanishes there. This is precisely the case for weights produced by
 Theorem~\ref{teo: peso en A1 para alpha-normales multiple}.

\medskip

In the particular case of the $n$-dimensional Euclidean space $(\RR^n, |\cdot|, \lambda)$, taking $R$ as the Green function associated
with the  polyharmonic Dirichlet problem in $\Omega$
\begin{eqnarray}\label{DP}\left\{\begin{array}{ccc}
(-\Delta)^m u=f &\mbox{ in }\Omega\\
\left(\frac{\partial}{\partial \nu}\right)^{j}u=0 &\mbox{ on
}\partial\Omega & 0\leq j\leq m-1,
\end{array}\right.
\end{eqnarray}
where $\frac{\partial}{\partial \nu}$ is the normal derivative, it is known that
the solution of \eqref{DP} is given by
\begin{equation}
\label{u1}
u(x)=\int_\Omega R(x,y)\, f(y)\, dy.
\end{equation}
Then, to prove some weighted Sobolev a priori estimates for the solution of this problem we need estimates for the Green function $R(x,y)$ and its derivatives.

For $\Omega$ a regular domain in $\RR^n$,  some of this estimates was given in \cite{DuranSanmartinoToschi2}, where   the authors proved that
\begin{equation}\label{estimate_for_the_solutionDST}
\|u\|_{W^{2m,p}(\Omega,wdx)}
\le C\, \|f\|_{L^p(\Omega,wdx)}
\end{equation}
for $w\in A_p$.

Let us remember that for $\eta$ a multi-index,
$\eta=(\eta_1, \eta_2, \dots,\eta_n)\in {\rm Z}\!\!{\rm
Z}^n_+$ we denote as usual $|\eta|=\sum_{j=1}^n \eta_j,$ $ D^{\eta}=\partial_{x_1}^{\eta_1}...\partial_{x_n}^{\eta_n}$. The Sobolev spaces are defined by
\[
W^{k,p}_w(\Omega)=\{v\in L^p(\Omega,wdx)\,:\, D^\eta v\in L^p(\Omega,wdx) \quad
\forall \, |\eta|\le k\}
\]
and the norm of $v\in W^{k,p}_w(\Omega)$ is given by
\[\|v\|_{W^{k,p}_w(\Omega)}=\sum_{|\eta|\leq k}\|D^\eta v\|_{L^p(\Omega,wdx)}\, .\]

\medskip

For $\Omega$ a general bounded open set in $\RR^n$
there exists a constant $C$ such that the
Green function satisfies the estimate
\begin{equation}
\label{cotaDG} |D^\eta R(x,y)|\le C |x-y|^{2m-n-|\eta|},
\end{equation}
 for $n\in [3,2m+1]\cap \NN$ odd and $0\le |\eta|\le m-\frac{n}{2}+\frac12$ and for
$n\in [2,2m]\cap \NN$ even and  $0\le |\eta|\le m-\frac{n}{2}$ (see \cite{MM}).

Then, since $|\eta|\le 2m-1$, it follows that
\begin{equation}\label{estimate_for_the_solution}
\|u\|_{W^{k,p}_w(\Omega)}
\le C\, \|f\|_{L^p(\Omega,wdx)}
\end{equation}
for values of $n$, $m$ and $k=|\eta|$ given above, provided that $w\in A_p(\mathbb R^n)$. \\

\noindent\textbf{Hausdorff measure based criteria for the local $\boldsymbol s$-Ahlfors condition.}
In the hypothesis of the results obtained in the previous section we require that the spaces are locally $s$-Ahlfors.
In order to check that a given  $(F,d)$ is  locally $s$-Ahlfors, we should be able to find a Borel measure  $\nu$ supported on $F$ and a real number $r_0>0$ such that (\ref{eq: normal}) holds for every $x\in F$ and every $0<r<r_0$. This does not seem to be an easy task. However, if $(F,d)$ is  locally $s$-Ahlfors, then there exists essentially only one Borel  measure $\nu$ satisfying the condition required in the definition. This fact is known in the Euclidean setting (see for instance \cite{Triebel}), and for the sake of completeness let us extend it to general metric measure spaces in Lemma~\ref{lemma: about s-sets}. First we shall need  recall some definitions. The basic aspects of Hausdorff measure and dimension can be found in \cite{Falconer}.

\smallskip

For $\rho>0$, we say that a sequence $\{E_i\}$ of subsets of $X$ is a \emph{$\boldsymbol\rho$\textbf{-cover}} of a set $E$ if
$E\subseteq \bigcup E_i$ and $\diam(E_i)\leq
\rho$ for every $i$. Let $E\subseteq X$ and $s\geq 0$ fixed.
We define
\[\mathscr{H}^s_{\rho}(E)=\inf\left\{\sum_{i=1}^{\infty}\diam^s(E_i): \{E_i\}
\textrm{ is a $\rho$-cover of } E\right\}.\]
Clearly  $\mathscr{H}^s_{\rho}(E)$ increases when  $\rho$ decreces, so that the limit  when  $\rho$ tends to $0$ exists
 (although it may be infinite). Then we define
\[\mathscr{H}^s(E)=\lim_{\rho \to 0}\mathscr{H}^s_{\rho}(E)=\sup_{\rho>0}
\mathscr{H}^s_{\rho}(E).\]
We shall refer to  $\mathscr{H}^s(E)$ as the
 \emph{$\boldsymbol s$\textbf{-dimensional Hausdorff measure}} of~$E$.

If in the above definition we replace the arbitrary $\rho$-cover by coverings by $d$-balls centering in the set $E$ and with diameter less than $\rho$, we obtain the \emph{$s$-dimensional spherical Hausdorff measure} of $E$, which will denote $\mathscr S^s$. 

\begin{lemma}\label{lemma: about s-sets} Let $(X,d)$ be a quasi metric space.
\begin{enumerate}
\item\label{item: equivalentes}
For every $s>0$ and every $E\subseteq X$ we have that \[\mathscr H^s(E)\leq \mathscr S^s(E)\leq K^s2^s\mathscr H^s(E),\] where $K$ denotes de triangle constant for $d$. In other words, the measures $\mathscr H^s$ and $\mathscr S^s$ are equivalents.
\item\label{item: radios por diametros} If $F$ is locally $s$-Ahlfors with measure $\nu$, then there exists a constant $\tilde c$ such that
    \[{\tilde{c}}^{-1} (\diam(B(x,r)))^s\le \nu(B(x,r)) \le \tilde{c} (\diam(B(x,r)))^s\] for every $x\in F$ and every $0<r<r_0$.
\item\label{item:s-set con la de hausdorff} If $(X,d)$ has finite metric dimension and $(F,d)$ is locally $s$-Ahlfors with measure $\nu$, then $(F,d)$ is locally $s$-Ahlfors with the restriction of  $\mathscr H^s$ to~$F$.
    \end{enumerate}
\end{lemma}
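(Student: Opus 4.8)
The plan is to prove the three parts of Lemma~\ref{lemma: about s-sets} essentially independently, since they address different aspects of the relationship between the local Ahlfors condition and Hausdorff measure. For part~\eqref{item: equivalentes}, the approach is to compare the two pre-measures $\mathscr H^s_\rho$ and $\mathscr S^s_\rho$ directly and then pass to the limit. The left inequality $\mathscr H^s(E)\le\mathscr S^s(E)$ is immediate because every admissible spherical cover (by balls centered in $E$ of diameter at most $\rho$) is in particular an admissible $\rho$-cover, so the infimum defining $\mathscr H^s_\rho$ is taken over a larger family and is therefore no larger. For the right inequality, I would start from an arbitrary $\rho$-cover $\{E_i\}$ of $E$; for each $i$ pick a point $x_i\in E_i\cap E$ (discarding sets that do not meet $E$, as they are irrelevant to covering $E$), and enclose $E_i$ in the ball $B(x_i,\diam(E_i))$. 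By the quasi-triangle inequality one checks $\diam(B(x_i,r))\le 2Kr$, so $\diam(B(x_i,\diam(E_i)))\le 2K\diam(E_i)$; summing the $s$-th powers gives $\mathscr S^s_{2K\rho}(E)\le (2K)^s\sum_i\diam^s(E_i)$, and taking the infimum over covers and then $\rho\to0$ yields $\mathscr S^s(E)\le (2K)^s\mathscr H^s(E)$.

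For part~\eqref{item: radios por diametros}, the point is simply that in a quasi-metric space the diameter of a ball is comparable to its radius from above but not necessarily from below, so one cannot write $\diam(B(x,r))=2r$. I would use the two-sided estimate: on one hand $\diam(B(x,r))\le 2Kr$ always; on the other hand, if $F$ has points other than $x$ within distance $r$, one gets a lower bound on the diameter. The cleanest route is to observe that the local Ahlfors condition \eqref{eq: normal} already gives $\nu(B(x,r))\simeq r^s$, and to combine this with $\diam(B(x,r))\le 2Kr$ to obtain the upper estimate $\nu(B(x,r))\le c\,r^s\le c(2K)^{-s}\diam(B(x,r))^s$ trivially rewritten; for the lower estimate one uses $\diam(B(x,r))^s\le (2Kr)^s$ so that $\nu(B(x,r))\ge c^{-1}r^s\ge c^{-1}(2K)^{-s}\diam(B(x,r))^s$. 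Collecting constants into a single $\tilde c$ finishes this part.

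Part~\eqref{item:s-set con la de hausdorff} is the substantive one and will be the main obstacle: it asserts that the (a priori possibly exotic) measure $\nu$ witnessing the local Ahlfors condition may be replaced by the intrinsic measure $\mathscr H^s\!\restriction_F$. The strategy is to show that $\mathscr H^s\!\restriction_F$ itself satisfies \eqref{eq: normal}, i.e. $\mathscr H^s(F\cap B(x,r))\simeq r^s$ for $x\in F$ and small $r$. The upper bound follows from the local Ahlfors condition together with a standard covering argument: using finite metric dimension, cover $F\cap B(x,r)$ by boundedly many balls of radius $\rho$ centered in $F$, each of $\nu$-measure comparable to $\rho^s$ and of diameter at most $2K\rho$; the number of such balls is controlled by $\nu(B(x,2Kr))/\rho^s\simeq (r/\rho)^s$ because the concentric balls of radius $\rho/(2K)$ are disjoint and each has $\nu$-measure $\gtrsim\rho^s$. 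Summing $\diam^s$ over this cover gives a bound independent of $\rho$, yielding $\mathscr H^s(F\cap B(x,r))\lesssim r^s$. The lower bound is where the real work lies: given any $\rho$-cover $\{E_i\}$ of $F\cap B(x,r)$, one enlarges each $E_i$ to a ball and uses the local Ahlfors lower bound $\nu(E_i)\le\nu(B(x_i,\diam E_i))\le \tilde c\,\diam^s(E_i)$ from part~\eqref{item: radios por diametros} to get $\nu(F\cap B(x,r))\le\sum_i\nu(E_i)\le\tilde c\sum_i\diam^s(E_i)$; taking the infimum over covers yields $c^{-1}r^s\le\nu(F\cap B(x,r))\le\tilde c\,\mathscr H^s(F\cap B(x,r))$, which is the desired lower bound. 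Thus $\mathscr H^s\!\restriction_F$ obeys \eqref{eq: normal} on $F$ with constants depending only on $c$, $K$, $s$ and the metric-dimension constant $N$, proving that $(F,d)$ is locally $s$-Ahlfors with measure $\mathscr H^s\!\restriction_F$.
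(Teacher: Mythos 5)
Your parts (\ref{item: equivalentes}) and (\ref{item:s-set con la de hausdorff}) follow essentially the paper's route: the same enlargement of cover elements to centered balls for the comparison of $\mathscr H^s$ and $\mathscr S^s$ and for the lower bound on $\mathscr H^s(F\cap B(x,r))$, and the same net-counting (via disjointness, or bounded overlap, of the shrunken balls plus the Ahlfors condition) for the upper bound. Two technicalities remain to be patched there: balls are open, so you must take $B(x_i,\diam(E_i)+\varepsilon)$ rather than $B(x_i,\diam(E_i))$ for $E_i$ to be contained in it (and to handle singleton $E_i$); and in the upper bound of (\ref{item:s-set con la de hausdorff}) the containing ball $B(x,2Kr)$ may have radius exceeding $r_0$, so the Ahlfors upper bound does not apply directly when $r$ is comparable to $r_0$ --- the paper splits into the cases $r<r_0/K$ and $r_0/K\le r<r_0$, covering by a finite net in the second case.

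The genuine gap is in part (\ref{item: radios por diametros}). You correctly note that in a quasi-metric space one only has $\diam(B(x,r))\le 2Kr$, but your chain for the upper estimate,
\[
\nu(B(x,r))\le c\,r^s\le c(2K)^{-s}\bigl(\diam(B(x,r))\bigr)^s,
\]
is false as written: the second inequality is equivalent to $2Kr\le \diam(B(x,r))$, which is the reverse of the inequality you actually have. What is needed is a lower bound on the diameter in terms of $r$, and this cannot come from the triangle inequality alone; it must come from the measure. The correct argument (the paper's) is that for $0<\varepsilon<\diam(B(x,r))$ one has $B(x,r)\subseteq B(x,\diam(B(x,r))+\varepsilon)$, so that, when $\diam(B(x,r))+\varepsilon<r_0$,
\[
\nu(B(x,r))\le \nu\bigl(B(x,\diam(B(x,r))+\varepsilon)\bigr)\le c\bigl(\diam(B(x,r))+\varepsilon\bigr)^s\le c\,2^s\bigl(\diam(B(x,r))\bigr)^s,
\]
while in the remaining case $\diam(B(x,r))\ge r_0>r$ the bound $\nu(B(x,r))\le cr^s\le c(\diam(B(x,r)))^s$ is immediate. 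Note that this error does not propagate to your part (\ref{item:s-set con la de hausdorff}): there you only use $\nu(B(x_i,\diam(E_i)))\le \tilde c\,(\diam(E_i))^s$, which is the raw Ahlfors upper bound at radius $\diam(E_i)$, not the statement of (\ref{item: radios por diametros}).
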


\begin{proof}
The first inequality in (\ref{item: equivalentes}) is straightforward. For the second one,
 let us fix $\rho>0$ and let $\{E_i\}$ a $\rho$-cover of $E$. We can assume that $E_i\cap E\neq \emptyset$ for every $i$.
 Let $\varepsilon>0$ given. For each $i$, let us fix $x_i\in E_i\cap E$ and set $B_i=B(x_i,\diam(E_i)+\varepsilon)$. Then $E_i\subseteq B_i$, so that $\{B_i\}$ is a covering of $E$ by $d$-balls with diameter less than  $2K(\rho+\varepsilon)$. Then
 \[\mathscr S^s_{2K(\rho+\varepsilon)}(E)\leq \sum_{i} \diam^s(B_i)<2^sK^s\sum_{i} \diam^s(E_i) + 2^sK^s \varepsilon^s.\]
 Hence $\mathscr S^s_{2K(\rho+\varepsilon)}(E)\leq 2^sK^s \mathscr H^s_\rho(E)+ 2^sK^s \varepsilon^s$. Taking $\varepsilon=\rho$  and by making $\rho\to 0$ we obtain the result.

To prove (\ref{item: radios por diametros}), let $c$ such that
$c^{-1} r^s\le \nu(B(x,r)) \le c r^s$  for every $x\in F$ and every  $0<r<r_0$.
Fix $x\in F$ and $0<r<r_0$ and set
 $B=B(x,r)$. Since
 $\diam(B)\le 2K r$, the first inequality follows immediately with $\tilde c=c2^{s}K^{s}$. To obtain the second one, let us consider
  two cases
 according to the size of $\diam(B)$.
If $\diam(B)<r_0$  there exists $0<\varepsilon<\diam(B)$ such that $\diam(B) +\varepsilon<r_0$.
Also for each $x\in F$  we have $B(x,r)\cap F\subset B(x,\diam(B) +\varepsilon)\cap F$.
Then
\begin{equation}\label{eq: radios por diametros}
\nu(B(x,r))\le \nu(B(x,\diam(B) +\varepsilon))\le c 2^s(\diam(B) )^s.
\end{equation}
Otherwise, if  $\diam(B)\ge r_0$, since $r<r_0$ we have $r<\diam(B)$. So that $\tilde c=c2^sK^s  $ works.

Finally, in order to prove (\ref{item:s-set con la de hausdorff}),
let us fix $x\in F$ and $0<r<r_0$, and set $F(x,r)$ to denote the set $F\cap B(x,r)$. By hypothesis there exists a constant   $c$  such that
\[c^{-1} r^s\leq \nu(F(x,r))\leq c r^s.\]
Let $\{B_j\}$ be a covering of $F(x,r)$ by $d$-balls centering in $F(x,r)$, and let $r_j$ the radio of $B_j$. Since we can assume $0<r_j<r_0$ for every $j$, from (\ref{eq: radios por diametros}) we have that
\[ \nu(B_j\cap F)\leq c  2^s (\diam(B_j))^s.\]
Then
\[c^{-1}r^s\leq \nu(F(x,r))\leq\sum_j \nu(B_j\cap F)\leq  c 2^s \mathscr S^s(F(x,r))\leq c 4^{s} K^s \mathscr H^s(F(x,r)).\]
On the other hand, we claim that there exists a constant $\Lambda$, which does not depend on $x$ or $r$, such that  $\mathscr H^s(F(x,r))\leq \Lambda r^s$.
 To prove it we shall consider two possibilities. First let us assume that $0<r<r_0/K$, and let $0<\rho<r_0/K-r$. Set $U$ a   finite $\rho$-net in  $F(x,r)$, let us say $U=\{x_1,x_2,\dots,x_{I_\rho}\}$.  Then $\{B(x_i,\rho): i=1,\dots,I_\rho\}$ is a $2K\rho$-cover of $F(x,r)$ and
 each $y\in F(x,r)$ belongs to at most  $N$ of such balls, where $N$ is the constant from the finite metric dimension of $X$, which  does not depend on $\rho$, $r$ or $x$. In fact, for a fixed  $y\in F(x,r)$, we have that $y\in B(x_i,\rho)$ if and only if $x_i\in B(y,\rho)$, so that the number of balls $B(x_i,\rho)$ to which $y$ belongs is equal to the cardinal of $U\cap B(y,\rho)$. Then we have
 \begin{align*}
   \mathscr H_{2K\rho}^s(F(x,r))&\leq \sum_{i=1}^{I_\rho} \rho^s\\&\leq c \sum_{i=1}^{I_\rho} \nu\left(B(x_i,\rho)\right)\\
   &\leq c N\nu\left(\bigcup_{i=1}^{I_\rho} B(x_i,\rho)\right)\\ &\leq c N\nu\left(B(x,K(r+\rho))\right)\\
    &\leq c^2 N K^s(r+\rho)^s.
 \end{align*}
%
Taking $\rho\to 0$ we obtain the desired result for this case. Then only remains to consider the case $\frac{r_0}{K}\leq r<r_0$. In this case, being  $B(x,r)$ a bounded set, there exists a finite $r_0(2K)^{-1}$-net in $B(x,r)$, let us say $U=\{x_1,\dots, x_{I_{r_0}}\}$.
Then $F(x,r)\subseteq \bigcup_{i=1}^{I_{r_0}} B\left(x_i,\frac{r_0}{2K}\right)$. Applying the previous case we obtain
 \[\mathscr H^s(F(x,r))\leq \sum_{i=1}^{I_{r_0}} \mathscr H^s\left(
 B\left(x_i,\frac{r_0}{2K}\right)\cap F\right)\leq I_{r_0} \Lambda \left(\frac{r_0}{2K}\right)^s\leq I_{r_0}\Lambda 2^{-s} r^s.  \]
Moreover, we have that $I_{r_0}\leq N^{1+\log_2K}$, since  every $s$-disperse subset of $X$ has at most $N^m$ points in each
ball of radius $2^m s$, for all $m\in\mathbb N$ and every $s>0$ (see
\cite{CoifWeiss} and \cite{Asso}). 
\end{proof}

\textbf{Muckenhoupt weights on the Sierpinski gasket.}
We shall apply  Theorem~\ref{coro: peso en A1 para alpha-normales} to a classical fractal set.
Set $T$  any equilateral triangle in $\mathbb R^2$.
Let $S$ be the   Sierpinski's gasket constructed in $T$ equipped with the usual distance $d$ inherited from $\mathbb R^2$ and with the $\alpha$-dimensional Hausdorff measure $\mathscr H^\alpha$, where $\alpha=\frac{\log 3}{\log 2}$. It is well known that $(S,d)$ is $\alpha$-Ahlfors with measure $\mathscr H^\alpha$ (see \cite{Mosco}). Set $F$  the  boundary of the triangle $T$. Then $F$ is  $1$-Ahlfors with measure $\nu$,  taking $\nu$ as the length.  So that Theorem~\ref{coro: peso en A1 para alpha-normales} says that
\[d(x,F)^{(1-\alpha)\gamma}\in A_1(S,d,\mathscr H^\alpha),\]
for every $0\leq\gamma < 1$.


\def\cprime{$'$}

\end{document}